      \theoremstyle{plain}
  \newtheorem{theorem}{Theorem}
  \newtheorem{lemma}{Lemma}
  \theoremstyle{definition}
  \newtheorem{definition}{Definition}
\algrenewcommand\algorithmicrequire{\textbf{Input:}}
\algrenewcommand\algorithmicensure{\textbf{Output:}}
\title{Using Spectral Submanifolds for Optimal Mode Selection in Model Reduction}
\author{Gergely Buza, Shobhit Jain\footnote{Corresponding author (shjain@ethz.ch)}, George Haller}
\date{\vspace{-5ex}
}
\begin{document}

\maketitle
\begin{center}
	Institute for Mechanical Systems, ETH Zürich \\
	Leonhardstrasse 21, 8092 Zürich, Switzerland 
	\par\end{center}
\begin{abstract}
Model reduction of large nonlinear systems often involves the projection of the governing equations onto linear subspaces spanned by carefully-selected modes. 
The criteria to select the modes relevant for reduction are usually problem-specific and heuristic. In this work, we propose a rigorous mode-selection criterion based on the recent theory of Spectral Submanifolds (SSM), which facilitates a reliable projection of the governing nonlinear equations onto modal subspaces. SSMs are exact invariant manifolds in the phase space that act as nonlinear continuations of linear normal modes.
Our criterion identifies critical linear normal modes whose associated SSMs have locally the largest curvature. These modes should then be included in any projection-based model reduction as they are the most sensitive to nonlinearities. To make this mode selection automatic, we develop explicit formulas for the scalar curvature of an SSM  and provide an open-source numerical implementation of our mode-selection procedure. We illustrate the power of this procedure by accurately reproducing the forced-response curves on three examples of varying complexity, including high-dimensional finite element models.

\end{abstract}

\section{Introduction}

The invariance of modal subspaces in linear oscillatory systems allows for a rigorous model reduction via linear projection onto any select group of linear normal modes~\cite{besselink2013}. For nonlinear systems, however, there are no mathematical results confirming the relevance of linear projection due to the general lack of invariance of modal subspaces. Indeed, a model reduction principle can only be justified mathematically if the reduced model is defined on an attracting invariant set of the nonlinear system \cite{HallerSFD}. Nonetheless, linear projection methods are routinely employed in the context of structural dynamics due to their simple implementation (cf.~\cite{Kuran1996,Ferhatoglu2018}, see \cite{benner2015} for a general survey). 

In practice, the accuracy of such a reduction procedure is dependent on an ad hoc choice of modes and hence needs to be verified on a case-by-case basis. A relevant example is an initially-straight, nonlinear von K\'{a}rm\'{a}n beam \cite{LAZARUS201235,MURAVYOV20031513,TOUZE2014,JAIN2018195}, where the axial and transverse degrees-of-freedoms are coupled only by the nonlinearities.
Refs.~\cite{LAZARUS201235,MURAVYOV20031513} 
propose a selection of modes supported by the physical understanding that a subset of axial modes should be included in the projection basis to account for the nonlinear bending-stretching coupling. Indeed, this reduction happens to result in an exact model reduction due to the presence of a slow manifold in this example, as shown in~\cite{JAIN2018195}. However, such physical intuition of selecting relevant axial modes is already unavailable upon a simple change in the geometry of the structure such as making the beam initially curved. More generally, heuristic mode-selection criteria are expected to be increasingly inaccurate as the size and the complexity of the underlying system increases.

Recent trends in nonlinear model reduction tackle these conceptual issues by constructing reduced-order models (ROMs) using invariant manifolds \cite{SHAW199385,Haller2016,HallerSFD,JainThesis}. While the computational feasibility of such invariant manifolds for high-dimensional dynamical systems is a subject of ongoing research, their relevance for nonlinear model reduction is certainly more appealing in comparison to linear projection. In particular, the spectral submanifolds (SSMs) \cite{Haller2016} allow the reduction of the nonlinear dynamics into an exact, lower-dimensional invariant manifold in the phase space. This SSM attracts all neighboring solutions, which ensures exponentially fast synchronization of general oscillations with their reduced model. The accuracy of the model can be made arbitrarily high without increasing its dimension: one can simply compute higher-order terms in a Taylor expansion for the SSM. 

In this work, we leverage the theoretical relevance of SSMs to select a smaller set of modes optimally for the purposes of reduction by modal projection. We perform this selection by computing the local curvature of the relevant SSM in the modal directions. These directional curvatures highlight the modes that would affect the nonlinear response most significantly. Starting with an initial set of modes using linear mode superposition, we develop a procedure to identify a linear subspace that captures the local curvature of the relevant SSM. We automate this process so that the user obtains an optimal set of modes with minimal input.

After describing the basic setup, we review the essential elements of SSM theory and its numerical implementation in Section~\ref{sect:SSMcomputation}. We then introduce the geometric notions behind our proposed nonlinear mode selection along with a motivational example in Section \ref{sect:idea}. The notion of the directional scalar curvature of an SSM is developed in Section~\ref{sect:curvature}. Finally, in Section \ref{sect:examples}, we use the reduced-order models~(ROM) generated from our directional-curvature-based mode selection criterion to accurately reproduce the forced response curves in finite-element examples.

\section{Setup}
\label{sect:setup}
In this work, we focus on periodically forced mechanical systems of the form
\begin{equation}
M \ddot{q} + C \dot{q} + K q + S (q, \dot{q}) = \varepsilon f (\Omega t), \qquad 0< \varepsilon \ll 1, 
\label{eq:forcedsystem}
\end{equation}
where $q(t) \in \mathbb{R}^n$ is the vector of generalized coordinates; $M \in \mathbb{R}^{n\times n}$ is the positive definite mass matrix; $K \in \mathbb{R}^{n\times n}$ is the positive semi-definite stiffness matrix; $C \in \mathbb{R}^{n\times n}$ is the damping matrix which is assumed to satisfy the proportional damping hypothesis, i.e., $C=\alpha K+ \beta M$ for some  ${\alpha, \beta\in \mathbb{R}}$; $S(q,\dot{q}) = \mathcal{O}\left( \left\vert q \right\vert^2,\left\vert q  \right\vert \left\vert \dot{q} \right\vert,\left\vert \dot{q} \right\vert^2 \right)$ is the nonlinearity which assumed to be of class $ C^r $ in its arguments for some integer $ r\ge 1 $; and $f$ is a  $T$-periodic forcing function ($ T = 2 \pi / \Omega $), with an amplitude parameter $ \varepsilon > 0$.

The proportional damping hypothesis enables us to simultaneously diagonalize the linear part of system~\eqref{eq:forcedsystem} using the undamped eigenmodes, $ u_j \in \mathbb{R}^n $, defined as
\begin{align}
Ku_j = \omega_j^2 Mu_j, \quad j = 1,\dots,n.
\end{align}
Without any loss of generality, we assume that the eigenmodes are mass-normalized, i.e.,  \begin{equation}\label{massnorm}
\left\langle u_i,  M u_j \right\rangle = \delta^i_j,
\end{equation} 
where~$ \delta^i_j $ denotes the Kronecker delta. We use the linear transformation $q = U \mu$, where $\mu \in \mathbb{R}^n $ denotes the vector of modal coordinates, and ${U = [u_1, \dots , u_n] \in \mathbb{R}^{n\times n}}$  is the transformation matrix composed of the eigenmodes of the undamped system, to express system~\eqref{eq:forcedsystem} in modal coordinates as 
\begin{align}
\ddot{\mu}_i  + 2 \zeta_i \omega_{i} \dot{\mu}_i + \omega_{i}^2 \mu_i + s_i(\mu,\dot{\mu}) = \varphi_i (t), \qquad i \in {1,\dots,n},
\label{modal}
\end{align}
where $s_i(\mu,\dot{\mu}):= \left\langle u_i,  S \left(U\mu,U\dot{\mu}\right) \right\rangle$, $ \zeta_i :=\frac{1}{2\omega_i}\left\langle u_i,  C u_i \right\rangle  $, and $ \varphi_i (t):= \left\langle u_i, \varepsilon f (\Omega t) \right\rangle $.

We then separate the nonlinear system~\eqref{modal} into two subsystems,
\begin{align}\label{master1}
    &\ddot{\xi}_i  + 2 \zeta_i \omega_{i} \dot{\xi}_i + \omega_{i}^2 \xi_i + s_i\left((\xi,\eta),(\dot{\xi},\dot{\eta})\right) = \varphi_i (t), \qquad i \in I,
    \\ 
    &\ddot{\eta}_j + 2 \zeta_j \omega_{j} \dot{\eta}_j + \omega_{j}^2 \eta_j + s_j\left((\xi,\eta),(\dot{\xi},\dot{\eta})\right) = \varphi_j (t), \qquad j \in J,
    \label{enslaved}
\end{align}
where system~\eqref{master1} is composed of a set of master modes~$I\subset \{1,\dots,n\}$, with the modal coordinates denoted by $\xi$; and system~\eqref{enslaved} is composed of the enslaved modes $J:=\{1,\dots,n\}\backslash I$, with modal coordinates denoted by $\eta$. We denote by $m$ the cardinality of $I$, i.e., the number of master modes in the eq.~\eqref{master1}.

The main principle behind any projection-based model reduction technique lies in suitably identifying the set $ I $ of master modes in a way, so that the ROM 
\begin{equation}\label{master}
\ddot{\xi}_i  + 2 \zeta_i \omega_{i} \dot{\xi}_i + \omega_{i}^2 \xi_i + s_i\left((\xi,0),(\dot{\xi},0)\right) = \varphi_i (t), \qquad i \in I,
\end{equation}
gives a reasonably accurate approximation to the true evolution of the master modes in eq.~\eqref{master1}.
 In the following, we develop a procedure to automate the selection of master modes using SSM theory, which we review next.

\section{Spectral submanifolds}
\label{sect:SSMcomputation}
Using the notation 
\begin{align}
z = \left(\begin{array}{c}
q \\
\dot{q} \end{array}\right),
\quad B = \left(\begin{array}{cc}
0 & \mathrm{I}_{n\times n}\\
-M^{-1}K & -M^{-1}C
\end{array}\right),
\quad F(z) = \left(\begin{array}{c}
0 \\
-M^{-1}S(q,\dot{q}) \end{array}\right).
\end{align}
we rewrite system~\eqref{eq:forcedsystem} for $ \varepsilon = 0 $ as the first-order, autonomous system
\begin{equation}
\dot{z} = B z + F(z), 
\label{eq:mechanicalsystem}
\end{equation}
whose linearization at $ z=0 $ is given by
\begin{equation}
    \dot{z} = B z.
    \label{linODE}
\end{equation}
For each mode $ i $ of the second-order system~\eqref{modal}, we denote the corresponding pair of eigenvalues of the first-order system~\eqref{linODE} by
\begin{equation}\label{key}
\lambda_{2i-1},\lambda_{2i} = \left(-\zeta_i \pm \sqrt{\zeta_i^2 - 1}\right) \omega_i, \quad i = 1,\dots, n.
\end{equation}
Hence, for any distinct eigenvalue pair $ \lambda_{2i-1},\lambda_{2i} $ associated to mode $ i $, we obtain a two-dimensional invariant subspace~$E_i$ of system~\eqref{linODE}. By linearity, we can generate higher-dimensional invariant subspaces of system~\eqref{linODE} by direct-summing such two-dimensional subspaces. A \textit{spectral subspace}~\cite{Haller2016} is a general invariant subspace of this type. For instance, the spectral subspace $ E_I $ generated by the set $ I $  of master modes is given as
\begin{equation}\label{mastermodalsubspace}
E_I := \bigoplus_{i \in I} E_i,
\end{equation} 
where $ \bigoplus $ is the direct-sum operator.

A \emph{spectral submanifold} (SSM) \cite{Haller2016} is an invariant manifold of system~\eqref{eq:mechanicalsystem} that serves as the smoothest nonlinear continuation of the spectral subspace of the linearized system~\eqref{linODE}. Specifically, the SSM emanating from spectral subspace~$ E_I $, is defined as follows. 
\begin{definition}
    An SSM, $M_I$, corresponding to a spectral subspace $E_I$ of the operator $B$ is an invariant manifold of the nonlinear system \eqref{eq:mechanicalsystem} such that
    \begin{enumerate}
        \item $M_I$ is tangent to $E_I$ at the origin and $\mathrm{dim}(M_I)=\mathrm{dim}(E_I)=2m$; 
        \item $M_I$ perturbs smoothly from $E_I$ under the addition of nonlinear terms; 
        \item $M_I$ is strictly smoother than any other invariant manifold satisfying (i) and (ii).
    \end{enumerate}
\end{definition}

The existence and uniqueness of such SSMs is guaranteed by the following theorem:

\begin{theorem} (Haller \& Ponsioen~\cite{Haller2016}, Theorem 3)
	\label{thm:ssmexistence}
Assume that 
 	\begin{enumerate}
 	\item the relative spectral quotient $ \sigma(E_I) := \mathrm{Int} \left( \frac{\min_{k \in J} \mathrm{Re} \lambda_k}{\max_{i \in I} \mathrm{Re} \lambda_i} \right) $ satisfies $ \sigma(E_I)\le r $,
 	\item the following low-order nonresonance conditions hold:
 	\begin{equation}
 	\sum_{i \in I} m_i  \lambda_i \neq \lambda_k, \qquad k \in J; \qquad 2 \leq \sum_{i \in I} m_i \leq \sigma(E_I), \qquad m_i \in \mathbb{N}. \label{eq:resonancecond}
 	\end{equation}
 \end{enumerate} 
 Then:
	\begin{enumerate}
		\item There exists a class-$C^{r}$ SSM, $M_I$, for system \eqref{master1}-\eqref{enslaved} that is unique amongst all $C^{\sigma (E_I)+1}$ manifolds that are tangent to $E_I$ at $(\xi, \dot{\xi},\eta,\dot{\eta})=0$.
		\item $M_I$ can locally be viewed as the image of an open set $O \subset E_I$ under the embedding  $$\psi: O \rightarrow \mathbb{R}^{2n}.$$
	\end{enumerate}
\end{theorem}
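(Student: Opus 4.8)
The plan is to prove Theorem~\ref{thm:ssmexistence} by the \emph{parametrization method}. Since $M_I$ is a local object near $z=0$, one first multiplies $F$ by a smooth cutoff supported in a small ball about the origin; because $F(z)=\mathcal{O}(|z|^2)$, this makes $\|F\|_{C^r}$ globally as small as desired without changing $M_I$ near $z=0$. One then looks for $M_I$ in graph form, i.e.\ as the image of an embedding $\psi:O\subset E_I\to\mathbb{R}^{2n}$ of the form $\psi(s)=(s,h(s))$ with $h:O\to E_J$, $h(0)=0$, $Dh(0)=0$, together with a reduced vector field $R:O\to E_I$, such that the invariance (conjugacy) equation
\[
    B\,\psi(s)+F(\psi(s))=D\psi(s)\,R(s),\qquad s\in O ,
\]
holds. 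Decomposing along the $B$-invariant splitting $\mathbb{R}^{2n}=E_I\oplus E_J$ and writing $F=(F^I,F^J)$ accordingly, the $E_I$-component gives $R(s)=B|_{E_I}s+F^I(\psi(s))$ directly, while the $E_J$-component reduces to a single invariance equation for the graph,
\[
    Dh(s)\,\bigl(B|_{E_I}s\bigr)-B|_{E_J}\,h(s)=F^J(\psi(s))-Dh(s)\,F^I(\psi(s)) .
\]
Any $C^1$ solution $h$ yields an invariant manifold tangent to $E_I$ at the origin, and conversely every such manifold is locally of this form.

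Next I would solve this equation formally. Writing $h=\sum_{|\mathbf m|\ge 2}h_{\mathbf m}s^{\mathbf m}$ in coordinates adapted to the (generalized) eigenbasis of $B|_{E_I}$ and matching terms of each order $\ell\ge 2$ produces, for every multi-index $\mathbf m$ with $|\mathbf m|=\ell$, a linear \emph{homological equation}
\[
    \Bigl(\textstyle\sum_{i\in I}m_i\lambda_i\,\mathrm{Id}-B|_{E_J}\Bigr)h_{\mathbf m}=N_{\mathbf m} ,
\]
where $N_{\mathbf m}$ is built solely from coefficients $h_{\mathbf m'}$ with $|\mathbf m'|<\ell$ (together with a nilpotent coupling among same-order coefficients when $B|_{E_I}$ is not semisimple). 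This system is solvable precisely when $\sum_{i\in I}m_i\lambda_i$ is not an eigenvalue of $B|_{E_J}$, i.e.\ when $\sum_{i\in I}m_i\lambda_i\neq\lambda_k$ for all $k\in J$; hypothesis~\eqref{eq:resonancecond} supplies exactly this for $2\le|\mathbf m|\le\sigma(E_I)$, which is all we need and is compatible with the available smoothness since $\sigma(E_I)\le r$. The recursion thus determines, uniquely, a polynomial $h^{\le}$ of degree $\sigma(E_I)$, hence a polynomial embedding $\psi^{\le}$ and reduced field $R^{\le}$.

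Finally I would promote $h^{\le}$ to a genuine $C^r$ solution and establish uniqueness. Set $h=h^{\le}+g$ with $g$ vanishing to order $\sigma(E_I)+1$ at the origin; the invariance equation becomes a fixed-point equation $g=\mathcal T(g)$ on a small ball in the Banach space of such $C^r$ maps $O\to E_J$. Here $\mathcal T$ is the composition of the nonlinear substitution operator built from $F$ and $h^{\le}$ with the inverse of the transport operator $g\mapsto Dg\,(B|_{E_I}s)-B|_{E_J}g$, the latter represented by integration along the reduced linear flow and bounded on functions vanishing to order $\sigma(E_I)+1$ exactly because of the spectral gap quantified by $\sigma(E_I)$; combined with the global smallness of $F$, this makes $\mathcal T$ a contraction, and its fixed point $g$ delivers the class-$C^r$ SSM $M_I$, with reduced dynamics $R=B|_{E_I}+F^I\circ\psi\in C^{r}$. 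For uniqueness, any $C^{\sigma(E_I)+1}$ invariant manifold tangent to $E_I$ must, by the same order-by-order homological recursion, have $\sigma(E_I)$-jet equal to $h^{\le}$; subtracting it and feeding the difference into the contraction forces the manifold to coincide with the graph of $h$ near the origin.

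The step I expect to be the main obstacle is this last one: closing the formal polynomial into an actual $C^r$ invariant manifold when the derivative budget is only $r\ge\sigma(E_I)$. The delicate points are the borderline case $\sigma(E_I)=r$, where the estimates must be run in precisely the right scale of (H\"older-type) function spaces so that no derivative is lost; the non-semisimple case, where the eigenvalue bounds on $e^{tB|_{E_J}}$ and $e^{tB|_{E_I}}$ must be replaced by bounds carrying polynomial-in-$t$ factors, which slightly shrinks the usable gap; and the geometric verifications that $\psi$ is indeed an embedding and that the reduced flow of $R$ remains in $O$, possibly after passing to a smaller neighbourhood. All of this is carried out in the proof of Theorem~3 in~\cite{Haller2016}, which builds on the parametrization method of Cabr\'e--Fontich--de la Llave and on the invariant-manifold estimates underlying~\cite{HallerSFD}; I would follow that route.
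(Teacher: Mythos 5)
The paper does not prove this theorem; it imports it verbatim as Theorem~3 of Haller \& Ponsioen~\cite{Haller2016}, whose proof indeed rests on the Cabr\'e--Fontich--de la Llave parametrization method that you outline. Your sketch --- graph-form invariance equation, order-by-order homological equations whose solvability is exactly the nonresonance condition~\eqref{eq:resonancecond} up to order $\sigma(E_I)$, and a contraction argument for the $C^{r}$ remainder with uniqueness in the $C^{\sigma(E_I)+1}$ class --- is consistent with the stated hypotheses and follows essentially the same route as the cited source.
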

So far we have discussed SSMs for the autonomous ($ \varepsilon = 0 $) limit of system~\eqref{eq:mechanicalsystem}. Similarly, however, SSMs can also be defined in the non-autonomous ($ \epsilon>0 $) setting. In that case, for $ \varepsilon > 0 $ small enough, the role of the fixed point at $ z =0 $ is taken over by a small-amplitude periodic orbit $ \gamma_{\varepsilon} $ created by the periodic forcing. This periodic orbit will have SSMs emerging from its spectral subbundles that are direct products of the periodic orbit with spectral subspaces of the origin. An SSM is then a fibre bundle that perturbs smoothly from a vector bundle $ \gamma_{\varepsilon} \times E_I $ under the addition of the nonlinear terms, as long as appropriate resonance conditions stated in Theorem 4 in \cite{Haller2016} hold. 
The fibers of the forced SSM inherit their topological properties and leading-order shape from the unforced setting, for small enough $\varepsilon$ (cf. Breunung \& Haller~\cite{breunung2017}). Hence, we intend to use the autonomous SSM, $ M_I $, in determining the influence of nonlinearity on the near-equilibrium forced response. 

\subsection{SSM computation}

Theorem~\ref{thm:ssmexistence} allows us to approximate the SSM, $M_I$, around the origin as a graph $\eta(x)$ over the subspace $E_I$ via a Taylor expansion, i.e.,
\begin{equation}
\eta_k (x) = \left\langle x , W_k x \right\rangle + \mathcal{O} (\vert x \vert ^3) , \qquad k \in J,
\label{eq:etak}
\end{equation}
where \begin{equation}
 x = \left(\xi, \dot{\xi}\right),
\end{equation}  
and $W_k =W_k^{T}  \in \mathbb{R}^{2m\times 2m}$ denote the matrix of SSM coefficients to be determined. As we will show, the local curvature of $M_I$ provides a robust criterion for mode selection and the second-order coefficients $ W_k $  are sufficient to compute for this purpose. 

The explicit solutions for the coefficients $W_k$ of the SSM can be found from a direct invariance computation, as detailed in Appendix~\ref{appendixD}. The main equations we solve are of the form
\begin{equation}
B_k \cdot W_k = -R_k, \qquad k \in J, 
\label{Wieqtensor}
\end{equation}
where the $R_k = R_k^{T}  \in \mathbb{R}^{2m\times 2m}$ are the quadratic coefficients extracted from the nonlinearities as
\begin{equation}
s_k (x) = \left\langle x, R_k x \right\rangle + \mathcal{O} (\vert x \vert ^3), \qquad k \in J,
\label{eq:Rk}
\end{equation}
and  $B_k$ is a fourth-order tensor for each $ k \in J $, whose entries are given by
\begin{equation}
B_{k,st}^{rq} = 2 A_s^r A_t^q + A_m^q A_t^m \delta_s^r + A_m^r A_s^m \delta_t^q + 2 \zeta_k \omega_{k} \left( A_t^q \delta_s^r + A_s^r \delta_t^q \right) +  \omega_{k}^2 \delta_r^s \delta_t^q. 
\label{Bistrq}
\end{equation}
Here we have followed the Einstein summation convention; the upper index is the row index and the lower index is the column index of a matrix, and \begin{equation}\label{A}
 A = \begin{pmatrix}
0 & \mathrm{I}_{m\times m} \\
- K_{I} & -C_{I}
\end{pmatrix}, \qquad K_I = \mathrm{diag}\left(\{\omega_i^2\}_{i\in I}\right),\quad C_I = \mathrm{diag}\left(\{2\zeta_i\omega_i\}_{i\in I}\right).
\end{equation}

Veraszt\'{o} et al.~\cite{VERASZTO2020115039} have already developed matrix equations to determine $W_k$ (see eqs.~(E.7-E.9) in~\cite{VERASZTO2020115039}), but their expressions were less amenable to numerical implementation. The expressions developed have been implemented in open-source MATLAB scripts~\cite{SST}. 

\section{Mode selection and directional curvature of SSM}
\label{sect:idea}
\subsection{Initial mode selection based on modal superposition}
The linearization of system~\eqref{eq:forcedsystem}, 
\begin{equation}
M \ddot{q}(t) + C \dot{q}(t) + K q(t)  = \varepsilon f (\Omega t), 
\label{eq:linearforcedsystem}
\end{equation}
exhibits a unique periodic response at the same frequency $ \Omega $, as that of forcing $ f $~\cite{Jain2019c}. Thanks to modal superposition, this linearized periodic response can be accurately approximated using a small set of eigenmodes along which the system is excited by the forcing $ f $. Furthermore, due to its hyperbolicity, the periodic response of the linearized system~\eqref{eq:linearforcedsystem} would be a valid approximation to the nonlinear periodic response of system~\eqref{eq:forcedsystem} for small enough $ \varepsilon > 0 $~\cite{Guckenheimer1983a}. Hence, any projection-based ROM~\eqref{master} must include the modes essential for reproducing the linearized response in the master mode set $ I $.

\subsection{Updating the set of modes based on directional curvatures of SSM}
Let $ I \subset \{1,\dots,n\} $ be the minimal set indexing the modes required for reproducing the periodic response of the linearized system~\eqref{eq:linearforcedsystem} using modal superposition. The SSM, $ M_I $, describes how the dominant spectral subspace, $ E_I $, of the linear system~\eqref{eq:linearforcedsystem} deforms locally, upon inclusion of nonlinear terms from system~\eqref{eq:forcedsystem} in the limit of $ \varepsilon \to 0 $. 

For a projection-based ROM to be effective, the master mode set $ I $ must be updated to capture this deformation by appending further modes to the projection subspace $ E_I $, if necessary. To assess the leading-order deformation of the SSM over directions spanned by linear normal modes, we use the \textit{directional curvatures} of  $ M_I $. Based on these directional curvatures, we will \textit{update} the mode set $ I $ to efficiently approximate the nonlinear response of system~\eqref{eq:forcedsystem} via the ROM~\eqref{master}. Figure \ref{fig:ssm} gives a geometric sketch of this idea.

\begin{figure}[h]
\includegraphics[width=0.7\textwidth]{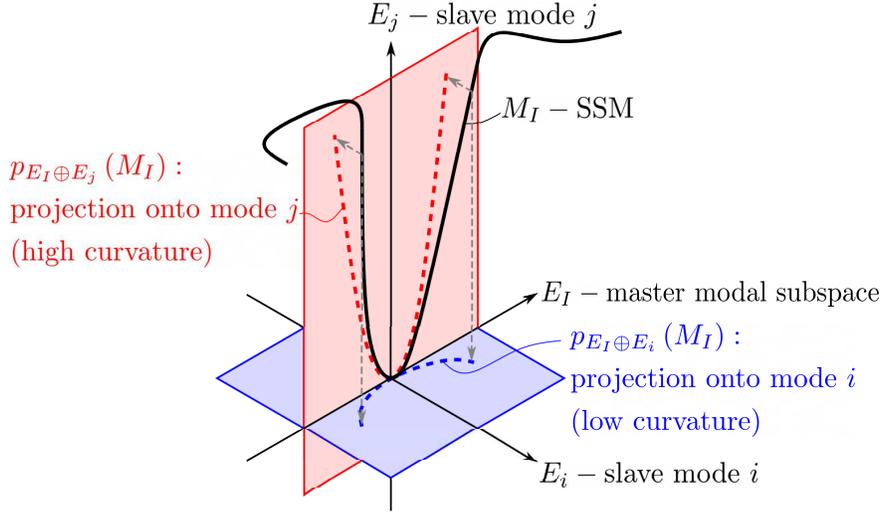}
\centering
\caption{Geometric interpretation of our mode selection criterion: Computing the SSM, $M_I$, for an initial mode set of $I$, then projecting it onto modes $i$ and $j$ yields the $2m$ dimensional manifolds $p_{E_I \oplus E_i} \left( M_I \right)$ and $p_{E_I \oplus E_j} \left( M_I \right)$ displayed by blue and red dashed lines, respectively. 
The high curvature of $p_{E_I \oplus E_j} \left( M_I \right)$ relative to that of $p_{E_I \oplus E_i} \left( M_I \right)$ prompts us to include mode $j$ in the master mode set $ I $, i.e., let $ I \to I \cup \{j\} $, which gives us the updated projection subspace, $ E_I \oplus E_j $, shaded in red. }
\label{fig:ssm}
\end{figure}
Let $p_E: \mathbb{R}^{2n} \rightarrow E$ define the orthogonal projection from the full phase space~$\mathbb{R}^{2n}$ onto a spectral subspace,~$E$. Then ${p_E \left( M_I  \right) \subset E}$ is a manifold of the same dimension as the SSM, $M_I$, provided that $E$ is selected such that $E_I \subset E$.
Therefore, projecting $ M_I $ orthogonally onto $ E_I \oplus E_k $ for each $ k \in J $, we obtain $n-m$ new manifolds, one for each enslaved mode, of dimension $2m$, in the form   
\begin{equation}
p_{E_I \oplus E_k} \left( M_I  \right) \subset E_I \oplus E_k, \qquad k \in J.
\end{equation}
The curvature of $p_{E_I \oplus E_k} \left( M_I \right)$ provides us with a notion of directional curvature for the SSM, $ M_I $, in the direction of mode $k$, as shown in Figure \ref{fig:ssm}.

Geometric intuition arising from Figure~\ref{fig:ssm} suggests that enslaved modes which have the largest directional curvatures, such as  mode $ j $, must be included in a projection subspace used for obtaining the ROM~\eqref{master}. This is because the projection subspace~${E_I \oplus E_j}$ (dashed red curve in Figure~\ref{fig:ssm}) effectively captures the local deformation of $ M_I $ in comparison to the projection subspace~${E_I \oplus E_i}$ (dashed blue curve in Figure~\ref{fig:ssm}). Hence, mode $ j $ is influential in determining the near-equilibrium nonlinear response and we propose that the master mode set should be updated to include such modes,  i.e., should be enlarged as $ I \to I \cup \{j\} $.

Before making this simple idea algorithmically precise, we motivate it with a small physical example in the following section.
\FloatBarrier
\subsection{Motivating example for mode selection}
\label{sect:motivexample}

To motivate our proposed mode selection procedure, we consider a single-mass spring system (see~Figure~\ref{fig:motivexample}) that is a three-dimensional variant of a similar example considered by Touz\'{e} et al.~\cite{TOUZE200477} and Breunung \& Haller~\cite{breunung2017}. 
\begin{figure}[h]
\includegraphics[width=0.24\textwidth]{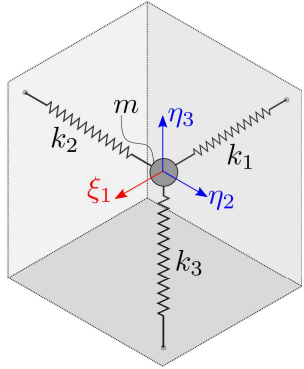}
\centering
\caption{The single-mass spring system.}
\label{fig:motivexample}
\end{figure}
Using the same potential functional (see eq.~(14) in~\cite{TOUZE200477}), we obtain the equations of motion
{\small
\begin{align}
    &\ddot{\xi}_1 +2\zeta_1 \omega_{1} \dot{\xi}_1 + \omega_{1}^2 \xi_1 + \frac{\omega_{1}^2}{2} \left( 3 \xi_1^2+ \eta_2^2+ \eta_3^2 \right) + \omega_{2}^2 \xi_1 \eta_2 +\omega_{3}^2 \xi_1 \eta_3 + \frac{\omega_{1}^2+\omega_{2}^2+\omega_{3}^2}{2} \xi_1 \left(  \xi_1^2+ \eta_2^2+ \eta_3^2 \right) = f_1 (t), \label{eq:motivsystem1} \\
    &\ddot{\eta}_2 + 2\zeta_2 \omega_{2} \dot{\eta}_2  + \omega_{2}^2 \eta_2  + \frac{\omega_{2}^2}{2} \left( 3 \eta_2^2+ \xi_1^2+ \eta_3^2 \right) + \omega_{1}^2  \eta_2 \xi_1 +\omega_{3}^2  \eta_2 \eta_3 + \frac{\omega_{1}^2+\omega_{2}^2+\omega_{3}^2}{2} \eta_2 \left(  \xi_1^2+ \eta_2^2+ \eta_3^2 \right) = f_2 (t), \label{eq:motivsystem2}\\
    & \ddot{\eta}_3 + 2\zeta_3 \omega_{3} \dot{\eta}_3  + \omega_{3}^2 \eta_3  + \frac{\omega_{3}^2}{2} \left( 3 \eta_3^2+ \xi_1^2+ \eta_2^2 \right) + \omega_{1}^2  \eta_3 \xi_1 +\omega_{2}^2  \eta_3 \eta_2 + \frac{\omega_{1}^2+\omega_{2}^2+\omega_{3}^2}{2} \eta_3 \left(  \xi_1^2+ \eta_2^2+ \eta_3^2 \right) =f_3 (t),
    \label{eq:motivsystem3}
\end{align}}
where $\omega_{i} = \sqrt{k_i/m}$ are the eigenfrequencies and the $\zeta_i$ are the viscous damping coefficients.

We choose $\omega_1 = 2$, $\omega_2 = 3$, $\omega_3 = 5$, $\zeta_1 = 0.01$, $\zeta_2 = 0.02$ and $\zeta_3 = 0.08$ as model parameters.  We apply a harmonic forcing along the first mode, i.e. $f_1 = F \cos \left( \Omega t \right)$ with an amplitude of  $F = 0.02$ and leave the remaining modes unforced, i.e., let $ f_2 = f_3 = 0 $. Hence, due to modal superposition, only the first mode participates in the linearized response as all other modes are left unforced. We are interested in obtaining the forced response curves around the first natural frequency $ \omega_1 $ of the system. We compute the periodic response for the forcing frequency  $ \Omega $ values in the interval $ [0.7\omega_1 $,  $ 1.3\omega_1 $].

Although there are advanced, example-specific mode selection recipes in literature for ROMs~\cite{Amabili2013}, perhaps the most straight forward and general strategy is to simply use a number of low-frequency modes that comfortably span the forcing spectrum~\cite{Geradin}.  Accordingly, a two-mode ROM would contain the modes 1 and 2 in a projection basis. We refer to this mode set as $ I_1 = \{1,2\}$.

To apply our proposed mode-selection criterion, we choose $I_0=\{ 1 \}$ as the initial master mode set, motivated by the linearized response. Calculating the second-order coefficients of $ M_{I_0} $, one readily observes from eqs.~\eqref{eq:motivsystem2}-\eqref{eq:motivsystem3} that
\begin{equation}
R_k = \begin{pmatrix}
 \omega_k^2/2 & 0 \\
  0 & 0
\end{pmatrix}, \qquad k \in \{ 2,3 \},
\end{equation}
since $\eta_k = \mathcal{O} \left( \vert \xi_1 \vert^2 \right)$. The coefficient matrices $W_2$ and $W_3$, obtained from the solution of the invariance equation~\eqref{Wieqtensor} at second order are
\begin{equation}
W_2 =
\begin{pmatrix}
    0.0712  &  0.0008 \\
    0.0008  & -0.1428
\end{pmatrix}, \qquad
W_3 = 
\begin{pmatrix}
   -0.8778  &  0.1033 \\
    0.1033  &  0.0953
\end{pmatrix}.
\end{equation}
Without a formal notion of scalar curvature at this point, we may treat $\Vert W_k \Vert_2$ as a measure of the curvature of the SSM in the direction of slave mode $k$. Comparing $\Vert W_2 \Vert_2=0.1428$ with $\Vert W_3 \Vert_2=0.8886$, we deduce that mode 3 has a significantly higher directional curvature in comparison to mode 2. Our proposed criterion would update the master mode set as $ I_0 \to I_0 \cup \{3\} $ to  include modes 1 and 3 in a two-mode projection basis. We refer to this mode set as $ I_2 = \{1,3\} $.

We compare the ROMs obtained from the two mode sets $I_1$ and $I_2$ with the full solution by computing the forced response curves with the results shown in Figure~\ref{fig:motiv}. The two ROMs result in remarkably different responses: the hardening response for $ I_1 $ and softening response for $ I_2 $. As Figure~\ref{fig:motiv} shows, the ROM obtained from the mode set~$ I_2 $, based on the directional curvatures of the SSM, correctly predicts the response and establishes the relative importance of mode $3$ over mode $ 2 $. 
 
\begin{figure}[h]
\includegraphics[width=0.5\textwidth]{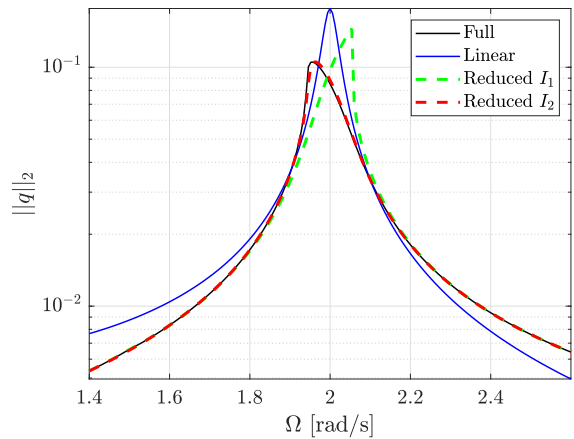}
\centering
\caption{Response curves obtained with projections onto two linear spectral subspaces $E_{I_1}$ and $E_{I_2}$ for two index sets $I_1=\left\{ 1,2 \right\}$ and $I_2=\left\{ 1,3 \right\}$. Response curves obtained using the full system and the linearized system are also shown for reference. The vector $q=(\xi_1, \eta_2, \eta_3)$ denotes the set of generalized coordinates.}
\label{fig:motiv}
\end{figure}

While the use of $ \Vert W_k \Vert_2 $ as a scalar measure of directional curvature seems intuitive in this simple example, we need a mathematical notion of directional curvature for arbitrary, finite-dimensional SSMs to make our mode-selection criterion systematic. We will introduce such a directional curvature next in Section~\ref{sect:curvature}.

\FloatBarrier
\section{Scalar curvature of an SSM and automated mode selection}
\label{sect:curvature}
We require a scalar quantity that is representative of the curvature of multi-dimensional manifolds. For our purposes, the classic sectional curvature~\cite{lee1997riemannian} loses its applicability beyond two dimensions, since it depends on the choice of a two-dimensional plane in the tangent space of the manifold. Therefore, we use an extension called the \textit{scalar curvature}, which is obtained by taking the trace of the Ricci tensor of the manifold~\cite{lee1997riemannian}. In the following, we develop explicit formulas for the scalar curvature of $ M_I $ along any given modal direction.


We first establish that our SSM approximation is a Riemannian manifold $(M_I,g)$ with an appropriate metric $g$ by expressing it as a graph over the master modal subspace $ E_I $. Let $\varphi: E_I \rightarrow \mathbb{R}^{2(n-m)}$ denote the approximating function developed for the enslaved modes $(\eta,\dot{\eta})$:
\begin{equation}
\varphi_k \left( x \right) :=
\begin{cases}
        \left\langle x , W_{k} x \right\rangle, & k = 1, \ldots , n-m,\\
        2 \left\langle x , W_{k-n+m} A x \right\rangle, & k = n-m+1, \ldots , 2(n-m),\\
\end{cases}
\label{eq:phi}
\end{equation}
where we have rearranged the indices of $W_k$ such that $W_k:= W_{J(k)}$, where $J$ is the set of enslaved modes.
Our approximate manifold for the SSM is then given as
$$
M_I = \mathrm{graph} (\varphi) =  \big\{ (x, \varphi \left( x \right) ) \, \vert \, x \in E_I \big\}.
$$
Let us denote by $\psi : E_I \supset O \rightarrow \mathbb{R}^{2n}$ the $C^{\infty}$ embedding defined on an open neighbourhood $O$ of $0 \in E_I$, given by
\begin{equation}
    \psi (x) : = (x, \varphi \left( x \right) ).
    \label{eq:psi}
\end{equation}

Define vector fields $e_1,\ldots,e_{2m}$ along $\psi$ by
\begin{equation}
    e_i (x) : = \frac{\partial \psi}{ \partial x^i} (x) \in T_{\psi(x)}M_I.
\end{equation}
Then the components $g_{ij}:O \rightarrow \mathbb{R}$ of the standard metric $g$ on $M_I$ inherited from the Euclidean space $\mathbb{R}^{2n}$ are given by
\begin{equation}
   g_{ij} = \left\langle e_i, e_j \right\rangle, 
   \label{metric}
\end{equation}
where $\langle \cdot , \cdot \rangle$ is the Euclidean inner product.
The vector fields $e_i$ form a local frame on $\psi(O)$, and at $\psi(0)$ they form an orthonormal basis of $T_{\psi(0)}M_I$, since with the given embedding \eqref{eq:psi}, we have that $g_{ij}(0) = \delta_i^j$.

We now proceed towards the definition of the scalar curvature of $M_I$.
For this, we require the Ricci tensor, which is a 2-tensor whose coefficients are given as\footnote{We are using the Einstein summation convention here and throughout the entirety of this section for upper-lower pairs of indices. We include the summation signs for clarity where the convention fails to work.}
\begin{equation}
    r_{ij} :=  R^k_{kij},
    \label{eq:Riccicomponents}
\end{equation}
where the $R_{ijk}^l $ are the coefficients of the Riemann curvature tensor, given explicitly in terms of the Christoffel symbols $\Gamma^k_{ij}$ as
\begin{equation}
  R_{ijk}^l = \partial_i \Gamma_{jk}^l - \partial_j \Gamma_{ik}^l +  \Gamma_{is}^l \Gamma^s_{jk} - \Gamma^l_{js} \Gamma^s_{ik},
  \label{eq:curvaturecomponents}
\end{equation}
with
\begin{equation}
    \Gamma^k_{ij} = \frac{1}{2} g^{kl} \big( \partial_i g_{jl} + \partial_j g_{il} - \partial_l g_{ij} \big),
    \label{eq:christoffel}
\end{equation}
where $\partial_i$ is shorthand notation for $\partial / \partial x^i$ (see \cite{petersen2006riemannian} or \cite{salamon2020LECTURENOTES}, for instance).

These preliminaries allow us to define the scalar curvature of a Riemannian manifold (see \cite{lee1997riemannian}, for instance) as follows.

\begin{definition}
    The scalar curvature of a Riemannian manifold $(M,g)$ is the function given by
    \begin{equation}
        \mathrm{scal}_g := g^{ij} r_{ij}.
        \label{eq:scalgdef}
    \end{equation}
\end{definition}

In the following, we denote by $\mathrm{curv}(I)$ the scalar curvature of the manifold $M_I$ evaluated at the origin.

\begin{lemma}
\label{LEMMA1}
Let $M_I$ denote the autonomous SSM corresponding to a master mode set $I$.
Then the scalar curvature of $M_I$ at the origin is given by
\begin{equation}
    \mathrm{curv}(I) = \frac{1}{2} \sum_{a,b=1}^{2m}  \big( - \partial_a \partial_a g_{bb} + 2 \partial_a \partial_b g_{ab} - \partial_b \partial_b g_{aa} \big) (0).
    \label{eq:scalcurvature}
\end{equation}
\end{lemma}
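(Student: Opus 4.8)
The plan is to compute $\mathrm{scal}_g = g^{ij} r_{ij}$ at the origin by exploiting the fact, already noted in the excerpt, that $g_{ij}(0) = \delta_{ij}$, so that $g^{ij}(0) = \delta^{ij}$ as well. Thus $\mathrm{curv}(I) = \sum_{a=1}^{2m} r_{aa}(0)$, and it remains to evaluate $r_{aa} = R^k_{kaa}$ at the origin. The key structural observation I would use is that the embedding $\psi(x) = (x,\varphi(x))$ has $\varphi$ vanishing to second order at $0$ (it is purely quadratic at leading order), so $\partial_i g_{jk}(0) = 0$: indeed $g_{jk} = \delta_{jk} + \langle \partial_j\varphi, \partial_k\varphi\rangle$ and each $\partial\varphi$ term is $\mathcal{O}(|x|)$, making the correction to the metric $\mathcal{O}(|x|^2)$ with vanishing first derivative at the origin. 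Consequently, in the formula~\eqref{eq:curvaturecomponents} for $R^l_{ijk}$, the quadratic terms $\Gamma\Gamma$ drop out at $0$, and from~\eqref{eq:christoffel} the Christoffel symbols themselves vanish at $0$ while their first derivatives are $\partial_i\Gamma^k_{jl}(0) = \tfrac12(\partial_i\partial_j g_{ll} + \partial_i\partial_l g_{jl} - \partial_i\partial_l g_{jl})$ — more precisely $\partial_i \Gamma^k_{jl}(0) = \tfrac12(\partial_i\partial_j g_{kl} + \partial_i\partial_l g_{jk} - \partial_i\partial_k g_{jl})(0)$, using $g^{kl}(0)=\delta^{kl}$ and that derivatives of $g^{kl}$ contribute only through products with $\Gamma(0)=0$.

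The next step is to substitute into $r_{ij} = R^k_{kij} = \partial_k\Gamma^k_{ij} - \partial_i\Gamma^k_{kj}$ (evaluated at $0$, the $\Gamma\Gamma$ terms gone), then plug in the derivative-of-Christoffel expression above and contract. Writing everything out, $\partial_k\Gamma^k_{ij}(0) = \tfrac12(\partial_k\partial_i g_{kj} + \partial_k\partial_j g_{ki} - \partial_k\partial_k g_{ij})(0)$ and $\partial_i\Gamma^k_{kj}(0) = \tfrac12(\partial_i\partial_k g_{kj} + \partial_i\partial_j g_{kk} - \partial_i\partial_k g_{kj})(0) = \tfrac12 \partial_i\partial_j g_{kk}(0)$, so that $r_{ij}(0) = \tfrac12\big(\partial_k\partial_i g_{kj} + \partial_k\partial_j g_{ki} - \partial_k\partial_k g_{ij} - \partial_i\partial_j g_{kk}\big)(0)$, with $k$ summed over $1,\dots,2m$. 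Setting $i=j=a$ and summing over $a$ (renaming the contracted index $k$ to $b$) gives
\[
\mathrm{curv}(I) = \sum_{a,b=1}^{2m} r_{aa}(0) = \tfrac12 \sum_{a,b=1}^{2m}\big(2\,\partial_b\partial_a g_{ba} - \partial_b\partial_b g_{aa} - \partial_a\partial_a g_{bb}\big)(0),
\]
which is exactly~\eqref{eq:scalcurvature} after reordering terms. So the argument is essentially: (1) reduce $g^{ij}r_{ij}$ to $\sum_a r_{aa}$ via $g(0)=\mathrm{Id}$; (2) show $\Gamma(0)=0$ and the $\Gamma\Gamma$ terms vanish because $\partial g(0)=0$; (3) express $\partial\Gamma(0)$ through second derivatives of $g$; (4) contract and simplify.

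The main obstacle — or rather the point that needs the most care — is justifying that $\partial_i g_{jk}(0) = 0$ rigorously and that derivatives of the inverse metric $g^{kl}$ never survive to the order we need: one must argue that wherever $\partial g^{kl}$ would appear it is multiplied by an undifferentiated $\Gamma^{\,\cdot}_{\cdot\cdot}(0) = 0$, or track the Leibniz expansion of $\partial_i\Gamma^k_{jl}$ carefully enough to see the $g^{kl}$-derivative terms cancel or vanish at the origin. A secondary bookkeeping hazard is the index contraction in $r_{ij} = R^k_{kij}$ (first versus third slot of the Riemann tensor) combined with the sign conventions in~\eqref{eq:curvaturecomponents}; I would double-check that the antisymmetry $R^l_{ijk} = -R^l_{jik}$ is consistent with the stated formula and that the trace is taken in the slot the definition~\eqref{eq:Riccicomponents} actually specifies, since an off-by-one in the slot or a sign flip would change~\eqref{eq:scalcurvature}. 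Everything else is a routine, if slightly tedious, symmetric-tensor manipulation.
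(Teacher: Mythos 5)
Your proposal is correct and follows essentially the same route as the paper's proof in Appendix~\ref{appendixLEMMA1}: reduce $g^{ij}r_{ij}$ to the trace of the Ricci tensor at the origin using $g_{ij}(0)=\delta_i^j$, discard the $\Gamma\Gamma$ and $\partial g^{-1}$ contributions because the Christoffel symbols (equivalently, the first derivatives of the metric, since $\varphi$ is quadratic) vanish at $0$, and express $\partial\Gamma(0)$ through second derivatives of $g$ before contracting. The one point you flag as needing care — that $\partial_i g_{jk}(0)=0$ kills the inverse-metric derivative terms — is exactly how the paper disposes of them (there via $\Gamma(0)=0$), and your direct argument from $\partial_j\varphi(0)=0$ is a perfectly rigorous, if anything slightly cleaner, version of the same observation.
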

\begin{proof}
This is a special case of a more general statement given on page 128 of \cite{lee1997riemannian}.
We provide a direct proof in Appendix \ref{appendixLEMMA1}.
\end{proof}

We can now make sense of the directional curvature discussed in Section \ref{sect:idea} (see~Figure~\ref{fig:ssm}).
We may replace the embedding $\psi$ given in \eqref{eq:psi} with $\tilde{\psi}_k(x) = (x, \tilde{\varphi}_k(x))$, where we declare that
\begin{equation}
    \tilde{\varphi}_k (x) = 
    \begin{pmatrix}
    \left\langle x, W_k x \right\rangle \\
    2 \left\langle x, W_k A x \right\rangle
    \end{pmatrix}.
    \label{eq:tildephi}
\end{equation}
Then the corresponding manifold $\mathrm{graph} (\tilde{\varphi}_k)$ is the projection of the full SSM to the $2m+2$ dimensional spectral subspace $E_I \oplus E_k$, i.e.,
\begin{equation}
\mathrm{graph} (\tilde{\varphi}_k) = p_{E_I \oplus E_k} \left( M_I \right).
\label{eq:GRAPHtildephi}
\end{equation}
Computing the scalar curvature of $\mathrm{graph} (\tilde{\varphi}_k)$ gives us the desired formula for the directional curvature of the SSM, $ M_I $, along mode $ k $. In the following lemma, we provide ready-to-use formulas for the scalar curvature of $ M_I $, $ \mathrm{curv}(I), $ and for its directional curvature along any enslaved mode $ k $, which we denote by $ \mathrm{curv}_k(I) $.

\begin{lemma}
\label{scalarcurvaturelemma}
Let $M_I$ denote the autonomous SSM corresponding to a master mode set $I$.
Then the scalar curvature of $M_I$ at the origin is given by
\begin{equation}
        \mathrm{curv}(I) = 4 \sum_{a,b=1}^{2m} \sum_{k=1}^{n-m}
     \left[
    W_{k,a}^a W_{k,b}^b + 4 \left(W^a_{k,r}A^r_a \right)  \left(W^b_{k,s}A^s_b \right) -
    (W^b_{k,a})^2 - (W_{k,r}^a A^r_b + W^b_{k,r} A^r_a)^2
    \right].
    \label{scalresult}
\end{equation}
Furthermore, the scalar curvature of the projected manifold $p_{E_I \oplus E_k} \left( M_I \right)$, i.e., the directional curvature of $ M_I $ along mode $ k $, at the origin is given as
\begin{equation}
    \mathrm{curv}_k(I) = 4 \sum_{a,b=1}^{2m}
     \left[
    W_{k,a}^a W_{k,b}^b + 4 \left(W^a_{k,r}A^r_a \right)  \left(W^b_{k,s}A^s_b \right) -
    (W^b_{k,a})^2 - (W_{k,r}^a A^r_b + W^b_{k,r} A^r_a)^2
    \right], \qquad k \in J.
    \label{scalkresult}
\end{equation}

\end{lemma}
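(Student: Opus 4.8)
The plan is to specialize the general trace-of-Ricci formula from Lemma~\ref{LEMMA1} to the concrete graph embedding $\psi$ in \eqref{eq:psi}, compute all second derivatives of the induced metric at the origin, and substitute. First I would record the Jacobian $\partial\psi/\partial x^a$ at a general point: its first $2m$ components give the identity block, the components indexed $k=1,\dots,n-m$ contribute $\partial_a\varphi_k = 2(W_k x)_a$, and the components indexed $k=n-m+1,\dots,2(n-m)$ contribute $\partial_a\big(2\langle x,W_{k-n+m}Ax\rangle\big) = 2\big((W_{k-n+m}A x)_a + (A^\top W_{k-n+m}x)_a\big) = 2\big(W_{k-n+m,r}^a A^r_s x^s + A^r_a W_{k-n+m,r}^s x^s\big)$. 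Hence $g_{ab}(x) = \delta_{ab} + \sum_k \partial_a\varphi_k\,\partial_b\varphi_k$, which is $\delta_{ab}$ plus a quantity quadratic in $x$, confirming $g_{ab}(0)=\delta_{ab}$ as already noted.

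Next I would differentiate twice. Since each $\partial_a\varphi_k$ is linear in $x$, write $\partial_a\varphi_k = L^k_{ap}x^p$ with constant coefficients: for the first family $L^k_{ap} = 2W^a_{k,p}$ (using symmetry $W_k=W_k^\top$), and for the second family $L^k_{ap} = 2\big(W^a_{k,r}A^r_p + A^r_a W^r_{k,p}\big) = 2\big(W^a_{k,r}A^r_p + W^p_{k,r}A^r_a\big)$, again using $W_k=W_k^\top$. Then $g_{ab} = \delta_{ab} + \sum_k L^k_{ap}L^k_{bq}\,x^px^q$, so $\partial_c\partial_d g_{ab}\big|_0 = \sum_k\big(L^k_{ac}L^k_{bd} + L^k_{ad}L^k_{bc}\big)$. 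Plugging into \eqref{eq:scalcurvature}:
\begin{equation*}
\mathrm{curv}(I) = \tfrac12\sum_{a,b}\sum_k\Big[-2L^k_{ba}L^k_{ba} - 2L^k_{bb}L^k_{aa} + 2\big(L^k_{aa}L^k_{bb} + L^k_{ab}L^k_{ab}\big) - 2L^k_{aa}L^k_{bb}\Big],
\end{equation*}
wait — I must be careful with which index of $g_{bb}$ is differentiated; let me instead just expand $-\partial_a\partial_a g_{bb} + 2\partial_a\partial_b g_{ab} - \partial_b\partial_b g_{aa}$ term by term using $\partial_c\partial_d g_{ab}|_0 = \sum_k(L^k_{ac}L^k_{bd}+L^k_{ad}L^k_{bc})$. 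This gives $-2\sum_k L^k_{ba}L^k_{ba}$ from the first, $2\sum_k(L^k_{aa}L^k_{bb}+L^k_{ab}L^k_{ab})$ from the second, and $-2\sum_k L^k_{ab}L^k_{ab}$ from the third; after the overall $\tfrac12$ and summing over $a,b$, the surviving terms are $\sum_{a,b,k}\big(L^k_{aa}L^k_{bb} - L^k_{ab}L^k_{ab}\big)$ — I'd double-check the bookkeeping here, as sign and index-pairing errors are the obvious pitfall.

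Finally I would split the sum over $k$ into the two families and substitute the explicit $L^k_{ap}$. The first family ($k=1,\dots,n-m$) contributes $\sum_{a,b}\big(4W^a_{k,a}W^b_{k,b} - 4(W^b_{k,a})^2\big)$, matching the first and third bracketed terms in \eqref{scalresult} after pulling out the factor $4$. The second family contributes $\sum_{a,b}\big(L^k_{aa}L^k_{bb} - (L^k_{ab})^2\big)$ with $L^k_{aa} = 2\big(W^a_{k,r}A^r_a + W^a_{k,r}A^r_a\big) = 4W^a_{k,r}A^r_a$ (here $a$ is summed inside, so both terms coincide) and $L^k_{ab} = 2\big(W^a_{k,r}A^r_b + W^b_{k,r}A^r_a\big)$, giving $16(W^a_{k,r}A^r_a)(W^b_{k,s}A^s_b) - 4(W^a_{k,r}A^r_b + W^b_{k,r}A^r_a)^2$, i.e. $4$ times the second and fourth bracketed terms in \eqref{scalresult}. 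Summing the two families over all $k$ and factoring out $4$ yields exactly \eqref{scalresult}. For \eqref{scalkresult} the argument is identical with the single fixed $k$: the embedding $\tilde\psi_k$ in \eqref{eq:tildephi} has the same structure with the enslaved-coordinate block replaced by the two-component block for mode $k$ alone, and by \eqref{eq:GRAPHtildephi} its graph is $p_{E_I\oplus E_k}(M_I)$, so its scalar curvature is precisely the $k$-th summand of \eqref{scalresult}. The main obstacle is purely organizational: correctly tracking the upper/lower index placement through the Einstein-summation contractions and not losing a cross term when expanding the squares — the geometry is trivial (a quadratic graph), but the index bookkeeping in \eqref{eq:scalcurvature} and in $L^k_{ap}$ for the second family is where an error would most plausibly creep in.
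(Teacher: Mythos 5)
Your proposal is correct and follows essentially the same route as the paper's proof in Appendix~\ref{appendixG}: both specialize the formula of Lemma~\ref{LEMMA1} to the graph embedding $\psi$, compute the second derivatives of $g_{ab}$ at the origin, and substitute; your only departure is the tidy reorganization of $\partial_a\varphi_k$ into constant coefficient matrices $L^k_{ap}$, which yields the same intermediate identity $\mathrm{curv}(I)=\sum_{a,b,k}\bigl(L^k_{aa}L^k_{bb}-(L^k_{ab})^2\bigr)$ and the same final expressions. The index bookkeeping, including the symmetry $L^k_{ab}=L^k_{ba}$ inherited from $W_k=W_k^{T}$ and the single-$k$ reduction for the projected manifold, checks out.
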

\begin{proof}
The proof is carried out in Appendix \ref{appendixG}.
\end{proof}

Finally, the explicit formulas for directional curvature given in Lemma~\ref{scalarcurvaturelemma} allow us to devise an automated mode-selection procedure as follows:
\begin{enumerate}[label=\arabic*.]
\item Choose an initial master mode set $ I $ that approximates the linearized periodic response, i.e., utilize linear modal superposition. 

\item Compute the directional curvature $ \mathrm{curv}_k(I) $ of $ M_I $ (see eq.~\eqref{scalkresult}) along each slave mode $ k $ in the set $ J = \{1,\dots,n\} \backslash I$. 

\item Choose a minimal subset $ P \subset J $ of slave modes that captures the directional curvatures up to a user-defined tolerance $ 0<p\ll 1 $. Specifically, we require that
\begin{equation}
	 \frac{\sum_{k\in J} \vert \mathrm{curv}_k(I) \vert - \sum_{k\in P} \vert \mathrm{curv}_k(I) \vert}{\sum_{k\in J} \vert \mathrm{curv}_k(I) \vert } \le p 
	 \label{ptolerance}
\end{equation}

\item Update the master mode set as $ I \to I \cup P $.
\end{enumerate}

Additionally, if the user a priori specifies a desired number of modes $N$ in the ROM, then the steps~2-4 of the selection procedure may be repeated until the mode set reaches cardinality $N$. In practice, the set $ P $ is robust with respect to the tolerance $ p $ and in the authors' experience,  $ p $ values in the range $ 0.05 $ to $ 0.15 $ provide optimal output.

A pseudo code of the automation algorithm is given in Appendix \ref{appendixalgorithm}. A numerical implementation of this algorithm is downloadable in the form of MATLAB scripts~\cite{SST}.

\FloatBarrier
\section{Numerical examples}
\label{sect:examples}

\subsection{Straight von Kármán beam}
\label{vonkarmansection}

As our first example, we use an initially straight von Kármán beam (see e.g.,~\cite{JAIN2018195}). Due to the bending-stretching nonlinear coupling, high-frequency axial modes are required in this problem to approximate the full nonlinear response using ROMs (see~\cite{LAZARUS201235,MURAVYOV20031513}).

A finite element discretization using cubic shape functions in the transverse direction and linear shape functions in the axial direction leads to the general form given by eq.~\eqref{eq:forcedsystem}. We choose a linear viscoelastic damping model, resulting in the proportional damping matrix 
\begin{equation}
C = \frac{\kappa}{E} K,
\end{equation}
where $E$ is the Young's modulus and $\kappa$ is the material damping coefficient. Due to this choice of damping,  $S$ in eq.~\eqref{eq:forcedsystem} is a purely position-dependent, cubic nonlinear function given by
\begin{equation}
S_k (q) = a_k^{ij} q_i q_j + b_k^{ijl} q_i q_j q_l,
    \label{samplenonlin}
\end{equation}
where the coefficients $a_k^{ij}$ and $b_k^{ijl}$ are polynomial stiffness coefficients. 

We consider an aluminium beam, which we divide into 10 elements of equal size.
The model parameters are $E = 70$ GPa, $\kappa = 0.1$ GPa$\cdot$s, $\rho = 2700$ kg/m$^3$ with geometric parameters $l = 1$ m (length), $h = 1$ mm (height) and $b=0.1$ m (width). 
We choose doubly clamped boundary conditions, i.e., both axial and transverse displacements are constrained at both ends. We apply a uniform-in-space, periodic-in-time external load in the transverse direction with an excitation frequency of $\Omega = 26$ rad/s, which is between the eigenfrequencies of the first and second modes. For the forcing amplitude, we take $F=2.3$ N.

To obtain a ROM via the proposed mode-selection procedure, we first choose an initial mode set $I_0 = \{1,2,3,4,5  \}$, which accurately recovers the linearized periodic response.  We then compute the directional scalar curvatures of the SSM, $M_{I_0}$, by first extracting the quadratic term $W_k$ in each direction $k \in J$ via eq.~\eqref{Wieqtensor}, and then substituting in the explicit formulas~\eqref{scalkresult}.

\begin{figure}[h]
\includegraphics[width=0.5\textwidth]{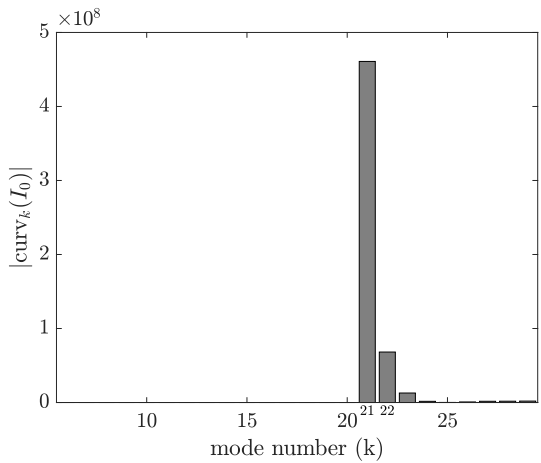}
\centering
\caption{The directional curvatures of $ M_{I_0} $ for an initial master mode set $I_0 = \{1,2,3,4,5  \}$ along each of the slave modes of an initially straight von K\'{a}rm\'{a}n beam. The axial modes 21 and 22 display the most prominent directional curvatures, confirming one's physical intuition.}
\label{fig:placeholder}
\end{figure}

We plot the directional scalar curvatures along each mode in Figure~\ref{fig:placeholder}. As the figure shows, our method automatically recommends that the axial modes 21 and 22 should be included in the ROM (with the user-defined tolerance chosen as $p=0.05$, cf.\ eq.~\eqref{ptolerance}), which confirms the available physical intuition based on the nonlinear bending-stretching coupling.
 \begin{table}[h]
	\centering
	\sisetup{table-format=6.4}
	\begin{tabular}{ll} \toprule
		
		{\textbf{Modes in ROM~\eqref{master}}} &  {\textbf{Relative error} $ e_r $~\eqref{error}}\\ \toprule
		$I_0=\left\{ 1,2,3,4,5 \right\}$ & 0.22 \\
		$I_1=\left\{ 1,2,3,4,5,6,7,8,9,10 \right\}$ & 0.18 \\
		$I_2=\left\{ 1,2,3,4,5,21,22\right\}$ &  0.03\\ \bottomrule
	\end{tabular}
	\caption{The relative error obtained from different ROMs of an initially straight von K\'{a}rm\'{a}n beam.  Note that the high-frequency axial modes in the mode set $ I_2 $, obtained from our automated mode selection procedure, are crucial for accurately approximating the nonlinear response (see~Figure~\ref{fig:u4t}).}
	\label{tab:error}
\end{table}

We use a mass-weighted relative error norm to compare the accuracy of ROMs to the full solution as 
\begin{equation}\label{error}
e_r = \frac{\Vert q_r - q\Vert_M}{\Vert q\Vert_M},
\end{equation}
where $ q(t) $ denotes the full solution, $ q_r(t) $ denotes the reduced solution, and $ \Vert \bullet \Vert_M $ denotes the mass norm defined as
\begin{equation}\label{mass_norm}
 \Vert x \Vert_M = \sqrt{\int_{[0,T]} \left\langle x(t),Mx(t) \right\rangle \mathrm{d} t},
\end{equation}
where $ T $ is the minimal time period of the periodic response.
  
Table~\ref{tab:error} compares the relative error $ e_r $ of ROMs based on three different mode sets $ I_0, I_1, I_2 $, where $ I_0 $ is the mode set that accurately approximates the linearized periodic response using modal superposition; $ I_1 $ is the set of the 10 lowest frequency modes used for comparison purposes; and $ I_2 $ is the mode set obtained from the proposed mode-selection procedure. We plot the periodic response of axial and transverse displacements at the 4th node of the beam ($ 0.3l $ from the constrained end) as a function of time in Figure~\ref{fig:u4t}. Clearly, the axial movement of the beam is not captured by the ROMs $ I_0, I_1 $ as they only contain low-frequency bending modes, an issue that is automatically rectified by the proposed mode-selection procedure using the mode set~$I_2$.
\begin{figure}[h]
\includegraphics[width=1\textwidth]{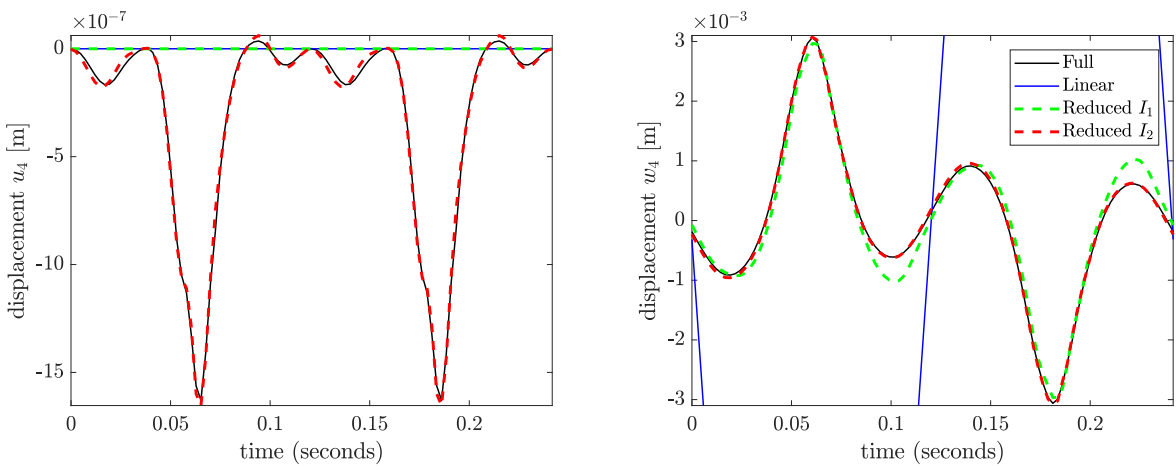}
\centering
\caption{Axial displacement (left) and transverse displacement (right) at the 4th node of a von K\'{a}rm\'{a}n beam, forced periodically with a loading amplitude $F=2.3$ N and an excitation frequency $\Omega=26$ rad/s. We compare the periodic response across ROMs (see eq.~\eqref{master}) obtained from mode sets $ I_1 = \{1,\dots,10\} $ and $ I_2 = \{1,\dots,5,21,22\} $, along with the linearized and full responses.}
\label{fig:u4t}
\end{figure}

To verify our predictions across a range of forcing amplitudes, we compute the periodic response for increasing forcing amplitude at the same forcing frequency $ \Omega = 26 $ rad/s, as shown in Figure~\ref{fig:incrementalamp}. We note that the mode set $ I_2 $ produces consistently good approximation for large forcing amplitudes, where the ROM obtained from the heuristically chosen low-frequency modes $ I_1 $ diverges from the full solution branch.
Furthermore, upon increasing the forcing amplitudes, we observe a nonphysical response for projection-based ROMs using the mode set $ I_1 $, whereas the solution corresponding to $I_2$ remains numerically stable. We detail this phenomenon for forcing amplitude $F=2.44$ N in Appendix~\ref{appendixB}. 
\begin{figure}[h]
\includegraphics[width=0.5\textwidth]{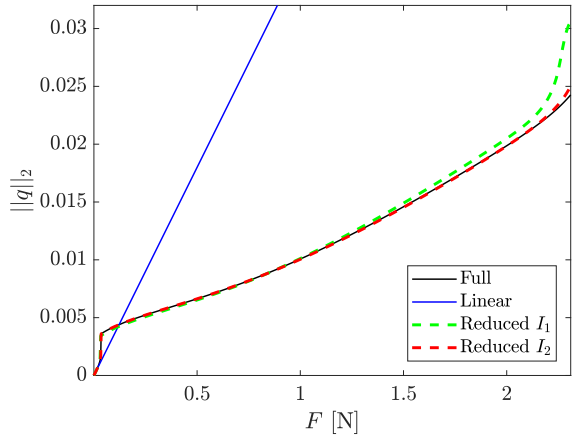}
\centering
\caption{Steady-state response curves as functions of increasing forcing amplitudes at the excitation frequency $\Omega = 26$ rad/s. The jump on the left-hand side of the figure corresponds to a jump between two separate solutions, one of which only exists for lower amplitudes, while the other one exists for larger amplitudes.}
\label{fig:incrementalamp}
\end{figure}

\FloatBarrier
\subsection{Curved von Kármán beam}
\label{section:curvedbeam}

As a second example, we consider a curved beam in the form of a circular arch, such that its midpoint is raised by $a = 5$ mm relative to its ends. We use the same geometrical and material parameters as defined in Section~\ref{vonkarmansection}, except for the beam height, which is chosen as $h=7$ mm. We apply time-periodic forcing on all transverse degrees of freedom with an amplitude of $F=80$ N.

A distinguishing aspect of this example is that the curved geometry introduces a linear coupling between the axial and transverse degrees of freedom of the beam. Hence, one can no longer identify any axial or transverse modes for mode selection based on the same physical intuition that we utilized for the initially straight beam (see Section~\ref{vonkarmansection}).

\begin{figure}[h]
\includegraphics[width=0.5\textwidth]{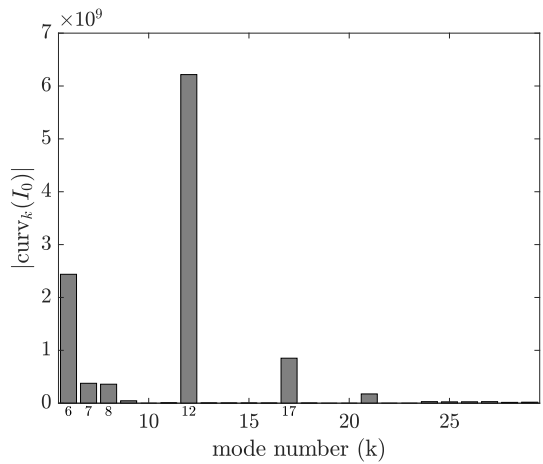}
\centering
\caption{The directional curvatures of $ M_{I_0} $ for an initial master mode set $I_0 = \{1,\dots,5  \}$ along each of the slave modes of a curved von K\'{a}rm\'{a}n beam. }
\label{fig:curvedrecomm}
\end{figure}

Once again, we choose an initial mode set $I_0 =  \{ 1,\dots,5 \}$ that accurately reproduces the linearized response. The directional curvatures of the SSM, $ M_{I_0} $, are shown in Figure~\ref{fig:curvedrecomm}. Performing our mode selection procedure with $p=0.05$ once more, we find that the master mode subset should be updated by including slave modes $ \{ 6,7,8,12,17 \}$. We denote by $I_2 = \left\{ 1,\dots,8,12,17 \right\}$ the updated mode set obtained with the proposed selection procedure.
\begin{figure}[h]
	\includegraphics[width=0.5\textwidth]{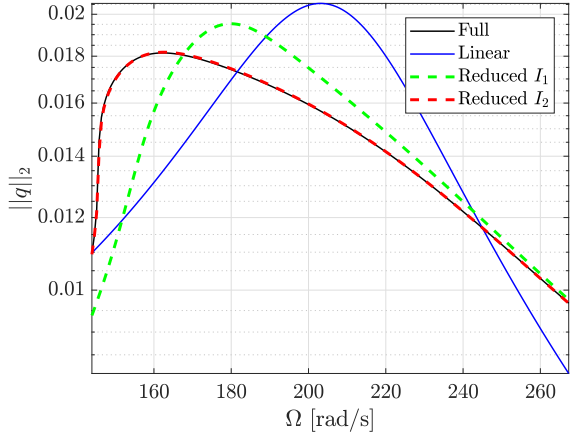}
	\centering
	\caption{A comparison of ROMs obtained from master mode sets $I_1=\left\{ 1,\dots,10 \right\}$ and $I_2=\left\{ 1,\dots,8,12,17 \right\}$ (see eq.~\eqref{master}), along with the full and linearized solutions. The frequency range shown is based on the first undamped eigenfrequency of the system, which is $208$ rad/s.}
	\label{fig:curvedcompar}
\end{figure}

We now compute the periodic response of the ROM~\eqref{master}, where the master mode set $ I = I_2 $. We compare our results to a ROM composed of a master mode set of the same size but heuristically comprising the lowest frequency modes as $I_1=\left\{ 1,\dots,10 \right\}$. The response curves are shown in Figure \ref{fig:curvedcompar}. Note that we generally expect a softening type behaviour for large enough forcing amplitudes in the case of curved beams~\cite{beamexplanation}. As Figure~\ref{fig:curvedcompar} shows, our proposed mode selection procedure (mode set $ I_2 $) systematically produces a reliable prediction of the steady-state response compared to a heuristic choice ($ I_1 $) of modes.

\FloatBarrier
\section{Conclusions}
\label{sect:conclusions}
We have developed a systematic procedure to obtain an optimal set of modes for projection-based reduced-order modeling of nonlinear mechanical systems. This nonlinear mode selection procedure relies on the directional curvatures of the spectral submanifolds (SSMs) constructed around the dominant modal subspaces. These SSMs form the centerpieces of near-equilibrium, nonlinear steady-state response and facilitate an exact model reduction of the nonlinear response.

While the SSMs can also be directly used to approximate the steady-state response~\cite{PONSIOEN2018269,Ponsioen2019ExactMR,breunung2017} in nonlinear mechanical systems, their computational feasibility for realistic high-dimensional problems is a subject of ongoing research. Our method relies on SSM theory, but still employs the widely applied linear projection to obtain a reduced-order model, which is straightforward to implement and whose computational advantages are well-understood.

We have shown through two beam examples that our mode selection criterion not only confirms the physical intuition of selecting axial modes to capture the nonlinear bending-stretching coupling, but also provides accurate results when such case-specific intuition is not available. The proposed nonlinear mode selection procedure, whose pseudo-code is given in Algorithm~\ref{appendixalgorithm}, is openly available in the form of open-source MATLAB scripts~\cite{SST}.

\appendix
\section{Convergence issues for the von Kármán beam example}
\label{appendixB}

In this Appendix, we compare the same error estimates as in Section \ref{vonkarmansection}, but for a larger forcing amplitude $F=2.44$N. We obtain solutions for such large amplitudes by  sequential continuation, i.e., by incrementally increasing the forcing amplitude in steps and using the solution from the previous step as an initial solution for the current step.

Table \ref{tab:appendixb} shows the error values as in Section \ref{vonkarmansection}, while Figure \ref{fig:appendixb} shows the comparison of axial and transverse displacements for ROMs obtained using mode sets ${I_1}$ and ${I_2}$. From the figure, we observe convergence to a nonphysical response for transverse displacements using the mode set $ I_1 $, which contains heuristically chosen low-frequency modes. On the other hand, our proposed mode selection procedure still provides a reliable approximation to the steady state response using mode set $ I_2 $.

  \begin{table}[h]
 \centering
  \sisetup{table-format=6.4}
    \begin{tabular}{ll} \toprule
    	
		{\textbf{Modes in ROM~\eqref{master}}} &  {\textbf{Relative error} $ e_r $~\eqref{error}}\\ \toprule
  $I_0=\left\{ 1,\dots,5 \right\}$ &  15.84 \\ 
  $I_1=\left\{ 1,\dots,10 \right\}$  & 15.25 \\ 
   $I_2=\left\{ 1,\dots,5,21,22\right\}$  & 0.11 \\ \bottomrule 
    \end{tabular}
	\caption{The relative error obtained from different ROMs of an initially straight von K\'{a}rm\'{a}n beam periodically forced with a forcing amplitude $ F = 2.44 $ N and a forcing frequency of $ \Omega = 26 $ rad/s (cf.~Figure~\ref{fig:appendixb}).}
  \label{tab:appendixb}
 \end{table}

\begin{figure}[h]
\includegraphics[width=1\textwidth]{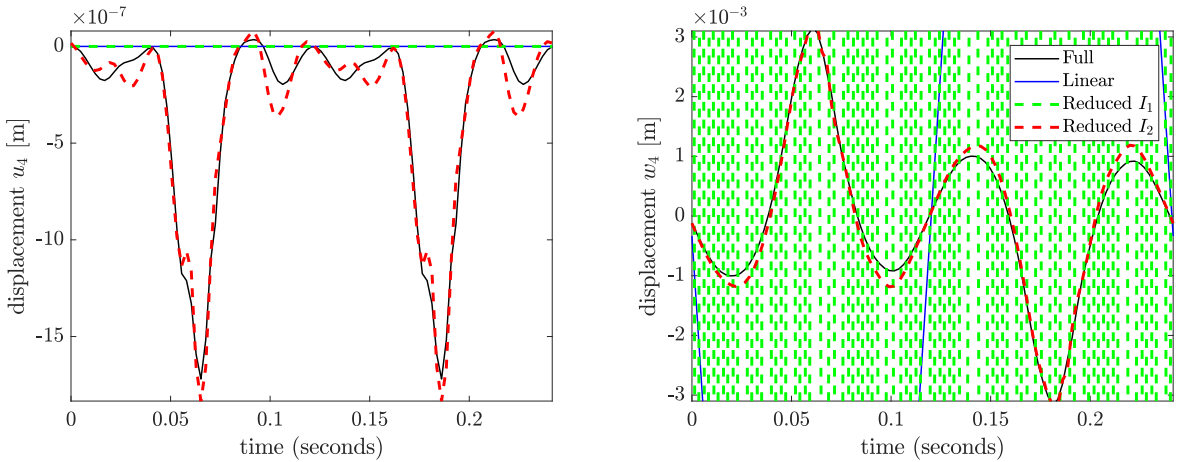}
\centering
\caption{Axial displacement (left) and transverse displacement (right) at the 4th node of a von K\'{a}rm\'{a}n beam, forced periodically with a loading amplitude $F=2.44$ N and an excitation frequency $\Omega=26$ rad/s. We compare the periodic response across ROMs (see eq.~\eqref{master}) obtained from mode sets $ I_1 = \{1,\dots,10\} $ and $ I_2 = \{1,\dots,5,21,22\} $, along with the linearized and full responses.}
\label{fig:appendixb}
\end{figure}

\FloatBarrier
\section{Derivation of the leading-order coefficients of the SSM}
\label{appendixD}

In this Appendix, we use a direct invariance computation to obtain expressions for the leading-order coefficients $W_k$ of the SSM as in \eqref{eq:etak}.
We begin by computing the first and second derivatives of $\eta_k$:
\begin{align}
&\dot{\eta}_k (x) = 2 \left\langle x , W_k A x \right\rangle+\mathcal{O} (\vert x \vert ^3) , & k \in J, \label{deta}\\
&\ddot{\eta}_k (x) = 2 \left\langle x , (A^T W_k A + W_k A^2) x \right\rangle+\mathcal{O} (\vert x \vert ^3) ,  & k \in J. \label{ddeta}
\end{align}
We also need to extract the quadratic nonlinearities from $s_k$,
\begin{equation}
s_k (x) = \left\langle x, R_k x \right\rangle + \mathcal{O} (\vert x \vert ^3), \qquad k \in J,
\end{equation}
where we have used that $\eta (x)= \mathcal{O} (\vert x \vert ^2)$.
Again, for uniqueness, we require that the $R_k$ are symmetric.
We can now substitute the derivatives \eqref{deta} and \eqref{ddeta} into equation \eqref{enslaved} to obtain
\begin{equation}
\left\langle x , \left[ 2(A^T W_k A + W_k A^2) + 4 \zeta_k \omega_{k} W_k A + \omega_{k}^2 W_k + R_k \right] x \right\rangle+\mathcal{O} (\vert x \vert ^3) = 0, \qquad k \in J.
\end{equation}
From this we can deduce that, for our second order approximation, the symmetric part of the quadratic coefficient matrix should vanish for all $k$, which leads to the following set of linear equations:
\begin{equation}
 2 A^T W_k A + W_k A^2 + \left( A^2 \right)^T W_k + 2 \zeta_k \omega_{k} ( W_k A + A^T W_k ) + \omega_{k}^2 W_k + R_k = 0, \qquad k \in J.   
 \label{Wiequation}
\end{equation}
From here onwards summation is implied over repeated indices to simplify the notation.
We can write equation \eqref{Wiequation} in a simpler form if we introduce the fourth-order tensor
\begin{equation}
B_k = B_{k,st}^{rq}  e^s \otimes e^t \otimes e_r \otimes e_q,
\end{equation}
with
\begin{equation}
    B_{k,st}^{rq} = 2 A_s^r A_t^q + A_m^q A_t^m \delta_s^r + A_m^r A_s^m \delta_t^q + 2 \zeta_k \omega_{k} \left( A_t^q \delta_s^r + A_s^r \delta_t^q \right) +  \omega_{k}^2 \delta_r^s \delta_t^q,
    \label{Bistrqa}
\end{equation}
where the upper index is the row index and the lower index is the column index of a matrix, and $\delta^i_j$ is the Kronecker delta. System \eqref{Wiequation} can now be written as
\begin{equation}
    B_k \cdot W_k = -R_k, \qquad k \in J, 
    \label{Wieqtensorap}
\end{equation}
where dot denotes the inner product of tensors, given by
\begin{equation}
B_k \cdot W_k = B_{k,st}^{rq} W_{rq}  e^s \otimes e^t.
\end{equation}
One can simply vectorize equation \eqref{Wieqtensorap} by rearranging the entries of $W_k$ and $R_k$ into vectors, which leads to a $4m^2$-dimensional linear system of equations, readily solvable via a matrix inversion.
For a more systematic approach to the higher-order computation of the SSM that is based on similar methodology, we refer the reader to Ponsioen et al.~\cite{PONSIOEN2018269,Ponsioen2019ExactMR}.

\section{Proof of Lemma \ref{LEMMA1}}
\label{appendixLEMMA1}

\begin{proof}
Combining equations \eqref{eq:Riccicomponents} and \eqref{eq:scalgdef}, we get that
\begin{align}
    \mathrm{curv}(I) &= g^{ib} R^a_{aib} (0) \nonumber\\
    &= \sum_{b=1}^{2m} R^a_{abb} (0),
    \label{eq:appCcurv}
\end{align}
where the second equality used that $g_{ib}(0) = g^{ib} (0) = \delta_b^i$.
Before advancing any further, we note that the $\Gamma^k_{ij}$ vanish at $0$ in our setting.
This can be inferred from the general fact that Christoffel symbols vanish at $p \in M$ for a chart that induces an orthonormal basis of $T_pM$.
Alternatively, a substitution of the specific embedding $\psi$ (from \eqref{eq:psi}) and the metric \eqref{metric} into the formulas for $\Gamma_{ij}^k$ gives the same result.

We may now proceed by inserting the formula for the $R^l_{ijk}$ \eqref{eq:curvaturecomponents} into \eqref{eq:appCcurv}, which yields
\begin{align}
    \mathrm{curv}(I) &= \sum_{b=1}^{2m} \Big(\partial_a \Gamma_{bb}^a - \partial_b \Gamma_{ab}^a + \Gamma_{as}^a \Gamma^s_{bb} - \Gamma^a_{bs} \Gamma^s_{ab} \Big) (0) \nonumber \\
    &= \sum_{b=1}^{2m} \Big(\partial_a \Gamma_{bb}^a - \partial_b \Gamma_{ab}^a \Big) (0).
\end{align}
Expanding the first half of this expression according to \eqref{eq:christoffel} gives
\begin{align}
\sum_{b=1}^{2m} \partial_a \Gamma^a_{bb}(0) &=\frac12 \sum_{b=1}^{2m}  \left[  \partial_a g^{al} \big( 2 \partial_b g_{bl} - \partial_l g_{bb}  \big) +g^{al} \big( 2 \partial_a \partial_b g_{bl} - \partial_a \partial_l g_{bb} \big)\right] (0) \nonumber\\
&=\frac12 \sum_{b=1}^{2m}  \left[ \left( \partial_a g^{al} \right) \left( 2 g_{li} \Gamma_{bb}^i \right) 
 +g^{al} \big( 2 \partial_a \partial_b g_{bl} - \partial_a \partial_l g_{bb} \big)\right] (0) \nonumber\\
 &=\frac12\sum_{b=1}^{2m}  \delta_a^l \left( 2 \partial_a \partial_b g_{bl} - \partial_a \partial_l g_{bb} \right) (0) \nonumber\\
 &=\frac12\sum_{a,b=1}^{2m}  \big( 2 \partial_a \partial_b g_{ba} - \partial_a \partial_a g_{bb} \big) (0).
\end{align}
A similar computation for the second term yields
\begin{equation}
  \sum_{b=1}^{2m}  \partial_b \Gamma_{ab}^a (0) =\frac12 \sum_{a,b=1}^{2m}    \partial_b \partial_b g_{aa}   (0),
\end{equation}
which proves \eqref{eq:scalcurvature}.
\end{proof}


\section{Proof of Lemma \ref{scalarcurvaturelemma}}
\label{appendixG}

\begin{proof}
Throughout this proof we do \textit{not} use the Einstein summation convention on the indices $a$ and $b$, but we use it on every other index.
We only need to compute the terms appearing in \eqref{eq:scalcurvature}.
We start by computing $\partial_a \partial_a g_{bb}$. 
First, we write $\psi$ out explicitly using its definition \eqref{eq:psi} and the definition of $\varphi$:
\begin{equation}
   \psi (x) = \left(x, \varphi(x) \right) =
\begin{pmatrix}
x \\
\left\langle x, W_1 x \right\rangle \\
\vdots \\
2 \left\langle x, W_{n-m} A x \right\rangle
\end{pmatrix} 
\in \mathbb{R}^{2n}.
\label{appc5}
\end{equation}
In the following we will make use of 
\begin{equation}
    \frac{\partial}{\partial x^a} \left\langle x, W_k x \right\rangle = 2 W_{k,i}^a x^i,
    \label{appc1deriv}
\end{equation}
and
\begin{equation}
    \frac{\partial}{\partial x^a} 2 \left\langle x, W_k A x \right\rangle= 2 \sum_{i=1}^{2m} \left(W_{k,j}^i A^j_a + W_{k,l}^a A^l_i  \right) x^i.
    \label{appc2deriv}
\end{equation}
We can now compute the smooth function $g_{bb}$ via the definition of the metric \eqref{metric}, using \eqref{appc5}, \eqref{appc1deriv} and \eqref{appc2deriv}:
\begin{align}
    g_{bb} (x) &= \left\langle \frac{\partial \psi}{ \partial x^b}, \frac{\partial \psi}{ \partial x^b} \right\rangle (x) \nonumber\\
    &= 1 + 4 \sum_{k=1}^{n-m} \left( W_{k,i}^b x^i \right)^2 + 4 \sum_{k=1}^{n-m} \left[ \sum_{i=1}^{2m} \left( W_{k,j}^i A^j_b +  W_{k,l}^b A^l_i \right) x^i \right]^2.
    \label{eq:gbb}
\end{align}
Differentiating the above expression with respect to $x^a$ once yields
\begin{equation}
\partial_a g_{bb} (x) = 8 \sum_{k=1}^{n-m} \left( W_{k,i}^b x^i \right) W_{k,a}^b
+ 8 \sum_{k=1}^{n-m} \left[ \sum_{i=1}^{2m} \left( W_{k,j}^i A^j_b +  W_{k,l}^b A^l_i \right) x^i \right] 
\left( W_{k,j}^a A^j_b +  W_{k,l}^b A^l_a \right),
\end{equation}
proceeding once more we obtain
\begin{equation}
    \partial_a \partial_a g_{bb} (x) = 8 \sum_{k=1}^{n-m} \left[ \left( W_{k,a}^b \right)^2 
+  \left( W_{k,j}^a A^j_b +  W_{k,l}^b A^l_a \right)^2 \right].
\label{appc2}
\end{equation}
In particular, $\partial_a \partial_a g_{bb}$ is a constant function for a second order approximation of the SSM.
Note that even if we would take a higher order approximation, evaluating the above function at $0$ would yield the same.
Furthermore, \eqref{appc2} is symmetric in $a$ and $b$ (by symmetry of $W_{k,a}^b$), and thus can be used for both terms $\partial_a \partial_a g_{bb}$ and $\partial_b \partial_b g_{aa}$ appearing in \eqref{eq:scalcurvature}.

As for the term $\partial_a \partial_b g_{ab}$, we proceed in a similar manner.
First we compute $g_{ab}$ via \eqref{appc1deriv} and \eqref{appc2deriv}:
\begin{equation}
    g_{ab} (x) = 4 \sum_{k=1}^{n-m} \left( W_{k,i}^a x^i \right) \left( W_{k,j}^b x^j \right)
    + 4 \sum_{k=1}^{n-m} \left[ \sum_{i=1}^{2m} \left( W_{k,j}^i A^j_a +  W_{k,l}^a A^l_i \right) x^i \right]
    \left[ \sum_{r=1}^{2m} \left( W_{k,q}^r A^q_b +  W_{k,s}^b A^s_r \right) x^r \right],
    \label{eq:gab}
\end{equation}
then differentiating with respect to $x^b$ once yields
\begin{align}
\partial_b  g_{ab} (x) &=  4 \sum_{k=1}^{n-m} W_{k,b}^a \left( W_{k,j}^b x^j \right) +
4 \sum_{k=1}^{n-m} W_{k,b}^b \left( W_{k,i}^a x^i \right) \nonumber\\
&+4 \sum_{k=1}^{n-m}  \left( W_{k,j}^b A^j_a +  W_{k,l}^a A^l_b \right) 
\left[ \sum_{r=1}^{2m} \left( W_{k,q}^r A^q_b +  W_{k,s}^b A^s_r \right) x^r \right] \nonumber\\
&+4 \sum_{k=1}^{n-m} \left[ \sum_{i=1}^{2m} \left( W_{k,j}^i A^j_a +  W_{k,l}^a A^l_i \right) x^i \right]
 \left( W_{k,q}^b A^q_b +  W_{k,s}^b A^s_b \right),
\end{align}
then differentiating with respect to $x^a$ yields
\begin{equation}
    \partial_a \partial_b  g_{ab} (x) =
4 \sum_{k=1}^{n-m} \left[ 
(W^b_{k,a})^2 + (W_{k,r}^a A^r_b + W^b_{k,r} A^r_a)^2
\right]+
4 \sum_{k=1}^{n-m} \left[ 
W_{k,a}^a W_{k,b}^b + 4 \left(W^a_{k,r}A^r_a \right)  \left(W^b_{k,s}A^s_b \right)
\right].
\label{egyemmeg2}
\end{equation}
Again, note that the same thing applies here as for \eqref{appc2}, namely, we have that even for a higher order approximation of the SSM evaluating \eqref{egyemmeg2} at $0$ would yield the same constant function.
Now substituting both \eqref{appc2} and \eqref{egyemmeg2} into \eqref{eq:scalcurvature} gives \eqref{scalresult}.

For the second statement, we compute the scalar curvature of the manifold $\mathrm{graph (\tilde{\varphi}_k)}$ with $\tilde{\varphi}_k$ given in \eqref{eq:tildephi}, as this is the same manifold as $p_{E_I \oplus E_k} \left( M_I \right)$ (c.f.\ \eqref{eq:GRAPHtildephi}).
Explicitly, this means that we repeat the same computation as given above, now with a different embedding $\tilde{\psi}_k (x) = (x, \tilde{\varphi}_k(x))$.
The only difference that arises with this change is that in the expressions for $g_{bb}$ and $g_{ab}$ (equations \eqref{eq:gbb} and \eqref{eq:gab}) the summations over $k$ are removed.
Consequently, the entire proof carries over with all summations over $k$ removed.
The result \eqref{scalkresult} now follows.

\end{proof}


\section{SSM-based mode selection algorithm}
\label{appendixalgorithm}

\begin{algorithm}
    \caption{Automation Algorithm (Implemented in SteadyStateTool~\cite{SST})}
    \begin{algorithmic}[1]
    \Require{Full system~\eqref{eq:forcedsystem} definition, coefficients $a_i^{ij}$ of the form \eqref{samplenonlin}, the tolerance $p$ defined in \eqref{ptolerance}, the maximum or desired number of modes $N$ in the ROM, and a boolean $type$ (\textbf{true} if the nonlinear mode selection should be repeated until the desired cardinality $ N $ of modes is reached, \textbf{false} otherwise )}
    \Ensure{An optimal mode set for $I$ projection-based ROM~\eqref{master}} 
	\Statex
    \Statex  \underline{Modal superposition}
    \State Compute the periodic response,  $x_{lin}$, of the full linearized  system~\eqref{eq:linearforcedsystem}
    \State $z \leftarrow U^T M x_{lin}$
    \State $n_i \leftarrow \Vert z_i \Vert_2, \quad \forall i$
    \While{$\#(I) < \mathrm{ceil}(2N/3)$} \Comment{Limiting the number of modes obtained based on linear analysis}
    \Statex \Comment{$\#(I)$ denotes the cardinality of the set $I$}
    \If{$\sum_{i \in I} n_i > 0.9 \sum_{i=1}^{n} n_i$} 
    \State \textbf{break}
    \EndIf
    \State $J \leftarrow \{1,\ldots,n\} \setminus I$
    \State $q \leftarrow \mathrm{arg} \max_{i \in J} n_i$
    \State $I \leftarrow I \cup \{q \}$
    \EndWhile
	\Statex
    \Statex \underline{Nonlinear mode selection} 
    \While{$\#(I) < N$} \Comment{Repeating nonlinear selection unless specified otherwise by  $type$}
    \State Compute the $R_k$ as in \eqref{eq:Rk} from the coefficients $a_k^{ij}$
    \State Compute the $W_k$ via \eqref{Wieqtensor}
    \State Compute the directional scalar curvatures $\mathrm{curv}_k(I)$ via \eqref{scalkresult}
    \State $J \leftarrow \{1,\ldots,n\} \setminus I$
    \State $g \leftarrow 0$
    \State $crit \leftarrow (1-p) \sum_{k \in J} \vert \mathrm{curv}_k(I)  \vert$
    \State $P \leftarrow \emptyset$ \Comment{$P$ is the recommended set of modes}
    \While{$g<crit$} \Comment{Selecting modes until we reach the scalar curvature criterion $p$}
    \State $g \leftarrow g+ \max_{k \in J} \vert \mathrm{curv}_k(I)  \vert$
    \State $J \leftarrow J \setminus \{ \mathrm{arg} \max_{k \in J} \vert \mathrm{curv}_k(I)  \vert \}$
    \State $P \leftarrow P \cup \{ \mathrm{arg} \max_{k \in J} \vert \mathrm{curv}_k(I) \vert \}$
    \EndWhile
    \If{$N- \left( \#(I)+\#(P) \right)<0$} \Comment{Ensure user-specified limit on maximum number of modes}
    \State $I \leftarrow  I \cup P(1:(N-\#(I))) $
    \Else
    \State $I \leftarrow   I \cup P $
    \EndIf
    \If{$type=$\textbf{false}} \Comment{Break the loop if type is false}
    \State \textbf{break}  
    \EndIf
    \EndWhile
    \end{algorithmic}
\end{algorithm}

\clearpage

\printbibliography[title={References}]

\end{document}